%
%

\documentclass[a4paper]{amsart}
\usepackage{amssymb}
\usepackage[arrow, matrix, curve]{xy}
\usepackage{comment}
\usepackage{stmaryrd}

\newcommand{\Edges}{\text{Edges}}

\def\to{\rightarrow}

\newtheorem{theorem}{Theorem}[section]
\newtheorem{lemma}[theorem]{Lemma}
\newtheorem{prop}[theorem]{Proposition}
\newtheorem{cor}[theorem]{Corollary}

\theoremstyle{definition}
\newtheorem{definition}[theorem]{Definition}

\theoremstyle{remark}
\newtheorem{remark}[theorem]{Remark}

\numberwithin{equation}{section}


\begin{document}

\title[Note on the tensor product of dendroidal sets]{Note on the tensor product of dendroidal sets}


\author[D.-C. Cisinski]{Denis-Charles Cisinski}

\author[I. Moerdijk]{Ieke Moerdijk}


\keywords{}


%

\maketitle

In this note, we provide a detailed proof of Lemma 1.11 in \cite{erratum},
rephrased here as the combination of Corollaries \ref{cor:1}
and \ref{cor:2}. As we explain in \cite{erratum}, theses two facts imply that
the operadic model structure on the category of dendrodial sets is enriched
in simplicial sets with the Joyal model structure, respectively
that its restriction to open dendroidal sets is a symmetric monoidal
model category. We emphasize that the corrections required for our paper
\cite{dSet model hom op} and described in \cite{erratum} only need
the first fact, and hence only need Corollary \ref{cor:1} below.


\section{Tensoring with simplices}

Let us show the following statement.

\begin{prop}\label{prop:1.1}
For any $n\geq 0$ and any object $T$ of $\Omega$, the map 
	\begin{equation*}
	(\partial\Delta[n] \otimes\Omega[T])
	\cup (\Delta[n] \otimes \partial\Omega[T]) \rightarrow \Delta[n] \otimes \Omega[T]
	\end{equation*}
is a normal monomorphism. 
\end{prop}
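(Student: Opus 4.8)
The plan is to reduce the proposition to two assertions about the dendroidal set $\Delta[n]\otimes\Omega[T]$: that it is a \emph{normal} dendroidal set, and that the displayed map is a \emph{monomorphism}. Together these suffice, because for a normal dendroidal set $X$ the group $\mathrm{Aut}(S)$ acts freely on $X(S)$ for every tree $S$, hence also on the invariant subset $X(S)\smallsetminus A(S)$ for every subobject $A\subseteq X$, so any monomorphism with codomain $X$ is automatically a normal monomorphism. The one tool I would use for both assertions is the shuffle description of the tensor product: writing $L_n$ for the linear tree with $n$ vertices, so that $\Delta[n]$ is the representable $\Omega[L_n]$, the dendroidal set $\Omega[L_n]\otimes\Omega[T]$ is the union of the representable subobjects $\Omega[R]$, where $R$ ranges over the shuffles of $L_n$ and $T$ and over all of their faces.

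For the first assertion I would filter $\Delta[n]\otimes\Omega[T]$ by the number of vertices of the shuffles involved, attaching at the $i$-th stage the representables $\Omega[R]$ for all shuffles $R$ with $i$ vertices, each arriving along the monomorphism $\Omega[R]\cap X_{i-1}\hookrightarrow\Omega[R]$ (every proper face of $R$ has fewer vertices and so has already been attached or is degenerate); this exhibits $X_{i-1}\hookrightarrow X_i$ as a pushout of a coproduct of such maps. As each $\Omega[R]$ is representable it is normal, so each of these maps is a normal monomorphism, and since normal monomorphisms are stable under coproduct, pushout and composition, $\emptyset\to\Delta[n]\otimes\Omega[T]$ is a normal monomorphism; i.e.\ $\Delta[n]\otimes\Omega[T]$ is normal. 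What makes the shuffle filtration behave — no two shuffles or faces being accidentally identified, no shuffle being fixed by a non-trivial automorphism — rests on $L_n$ being linear: along any branch of a shuffle of $L_n$ with $T$ the $L_n$-coordinate is forced, so the relevant combinatorics is governed by $T$ alone. Running the same filtration relative to the subobject appearing in the statement would then give the proposition directly, once that subobject has been correctly identified.

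The step I expect to be the genuine obstacle is showing that the displayed map is a monomorphism — this is precisely where one must use that one of the two factors is a simplicial set, since for the tensor product of two arbitrary dendroidal sets the analogous comparison map need not be monic (Eckmann--Hilton type collapses occur, and this is exactly what obstructs a naive monoidal model structure). The mechanism I would rely on is the statement that $X\otimes Y\to X'\otimes Y'$ is a monomorphism whenever $X\to X'$ and $Y\to Y'$ are monomorphisms and at least one of the objects involved is a simplicial set; applying this to $\partial\Delta[n]\hookrightarrow\Delta[n]$ and to $\partial\Omega[T]\hookrightarrow\Omega[T]$ shows that $\partial\Delta[n]\otimes\Omega[T]$, $\Delta[n]\otimes\partial\Omega[T]$ and $\partial\Delta[n]\otimes\partial\Omega[T]$ are subobjects of $\Delta[n]\otimes\Omega[T]$, and a further such argument identifies the intersection:
\[
  (\partial\Delta[n]\otimes\Omega[T])\cap(\Delta[n]\otimes\partial\Omega[T]) \;=\; \partial\Delta[n]\otimes\partial\Omega[T].
\]
Since in a presheaf category the pushout of two subobjects along their intersection is their union, the displayed map is then the inclusion of the subobject $(\partial\Delta[n]\otimes\Omega[T])\cup(\Delta[n]\otimes\partial\Omega[T])$, hence a monomorphism. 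The work behind this mechanism — a face-by-face analysis of which shuffles and faces of $\Delta[n]\otimes\Omega[T]$ fall into each of the two subobjects, using that the $L_n$-direction is linearly ordered — is the technical heart; granted it, combining with the normality established above completes the proof.
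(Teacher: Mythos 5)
Your reduction of the statement to two claims --- that $\Delta[n]\otimes\Omega[T]$ is a normal dendroidal set, and that the displayed map is a monomorphism --- is exactly the paper's first move (normality is even easier than your filtration: $\Omega[S]\otimes\Omega[T]$ is the dendroidal nerve of a $\Sigma$-cofibrant operad for \emph{any} trees $S,T$, so linearity of $[n]$ plays no role there). The problem is with the second claim, which is the entire content of the proposition. The ``mechanism'' you propose to rely on --- that tensoring monomorphisms yields a monomorphism and computes intersections correctly whenever one factor is a simplicial set --- is not an available tool: it is essentially Corollary~\ref{cor:1} of this note, which is \emph{deduced from} the proposition you are proving. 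As written, the argument is circular at its key step, and you yourself flag that the ``technical heart'' is being granted rather than supplied. Note also that no soft or formal argument can close this gap: the statement fails if $\Delta[n]$ is replaced by its closure $\textrm{cl}[n]$ (adding a single bald vertex on top already breaks it), so the proof must exploit the open, linear structure of $[n]$ in an essential combinatorial way.

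The missing idea is a concrete parametrization of the nondegenerate dendrices of $\Delta[n]\otimes\Omega[T]$. The paper proves (Lemma~\ref{lemma:1}) that monomorphisms $S'\rightarrowtail\Delta[n]\otimes\Omega[T]$ are in bijection with pairs $(u,h)$, where $u\colon S\rightarrowtail T$ is a face of $T$ and $h$ is a monotone multi-valued ``height function'' from the edges of $S$ to $\{0,\dots,n\}$; under this bijection, $S'$ factors through $\partial_i\Delta[n]\otimes\Omega[T]$ if and only if $i$ does not occur in the image of $h$, and through $\Delta[n]\otimes F$ for a face $F$ of $T$ if and only if $u$ factors through $F$. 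From these two properties the three intersection identities one actually needs (two simplicial faces, two dendroidal faces, and the mixed case) follow by inspection, and the proposition follows. Your heuristic that ``the $L_n$-coordinate is forced along any branch'' is precisely the intuition behind this lemma, but turning it into the stated bijection --- including the explicit construction, from $(u,h)$, of a shuffle through which $S'$ maps --- is the actual work, and your proposal does not contain it.
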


The map here is from the pushout over $\partial\Delta[n]\otimes \partial \Omega[T]$.
Note that, if $S$ is any tree, $\Omega[S]\otimes\Omega[T]$ is the dendroidal nerve of a
$\Sigma$-cofibrant operad, and is therefore a normal dendroidal set.
As, in the category of dendroidal sets,
any monomorphism with normal target is a normal monomorphism,
it is only the property of being a monomorphism which
needs to be checked here.

In what follows, we will abreviate the notations by writing
$T=\Omega[T]$ and $[n]=\Delta[n]$.

\begin{proof}
This property amounts to showing that both maps
$\partial[n]\otimes T \rightarrow [n] \otimes T$ and $[n]\otimes \partial T \rightarrow [n]\otimes T$ are mono, and their pullback is contained in $\partial[n]\otimes\partial T$; in other words, using that the canonical maps of the form
$\partial_i[n]\otimes T\rightarrow [n]\otimes T$
and $[n]\otimes\partial_x T\rightarrow[n]\otimes T$ are
easily seen to be monomorphisms, we have to prove
that one has the following
identifications:

\begin{enumerate}
\item[(A1)] $\partial_i[n]\otimes T \cap \partial_j[n]\otimes T = (\partial_i[n] \cap \partial_j[n])\otimes T$;
\item[(A2)] $[n] \otimes \partial_x T \cap [n]\otimes \partial_y T = [n]\otimes (\partial_x T \cap \partial_y T)$;
\item[(A3)] $\partial_i[n]\otimes T \cap [n]\otimes \partial_xT = \partial_{i}[n]\otimes \partial_x T$.
\end{enumerate}

The inclusions from right to left are obvious and the inclusions form left to right will
follow from a suitable way of encoding dendrices of $[n]\otimes T$, as expressed in
Lemma \ref{lemma:1} below.
\end{proof}

By definition, a dendrex of $[n]\otimes T$ is a map $S\rightarrow A$ where $A$ is a shuffle of $[n]$ and $T$. Since $A\rightarrowtail [n]\otimes T$ and $S\rightarrow A$ factors as $S\twoheadrightarrow S' \rightarrowtail A$, every such dendrex factors as a degeneracy followed by a mono. So we will look at monos 
	\begin{equation*}
	S \rightarrowtail [n] \otimes T,
	\end{equation*}
which are then faces of shuffles of $[n]$ and $T$. We will use the natural partial ordering on the edges of a tree where the root is minimal and the leaves are maximal. 

For an integer $n\geq 0$, a \emph{height function} on a tree $S$ is \emph{multi-valued} function 
	\begin{equation*}
	h\colon \Edges(S) \rightarrow \{0,1,\dots,n\}
	\end{equation*}
with the property that if $e\leq e'$ and $i\in h(e), i'\in h(e')$, then $i\leq i'$. (We might call such an $h$ \emph{monotone}). 

For example, for $n=5$ and 
\begin{equation*}
\xymatrix@R=3pt@C=8pt{
&&&*{ \bullet}&& _{e}&&&& \\
S\colon &  & _b && *{\bullet} \ar@{-}[ul]_{d}  \ar@{-}[ur] &&& \\
&&&*{\bullet}  \ar@{-}[ul] \ar@{-}[ur]^c  &&&&\\
&&&*=0{}\ar@{-}[u]^a &&&&
}
\end{equation*}  

the picture 

\begin{equation*}
\xymatrix@R=3pt@C=8pt{
&&&*{ \bullet}&& _{45}&&&& \\
&& _{2} && *{\bullet} \ar@{-}[ul]_{34}  \ar@{-}[ur] &&& \\
&&&*{\bullet}  \ar@{-}[ul] \ar@{-}[ur]^{13}  &&&&\\
&&&*=0{}\ar@{-}[u]^{1} &&&&
}
\end{equation*}  

indicates the height function $h(a)=\{1\}$, $h(c)=\{1,3\}$, etc. 

\begin{lemma}\label{lemma:1}
There is a bijective correspondence between monos $S' \rightarrowtail [n]\otimes T$ and pairs ($u\colon S\rightarrowtail T, h)$ where $u\colon S\rightarrow T$ is a face of $T$ and $h$ is a height function on $S$. 
This correspondence has the following properties.
\begin{enumerate}
\item[(1)] For any $i\in \{0,1,\dots, n\}$ the map  $S' \rightarrowtail [n]\otimes T$ factors through  $\partial_i[n]\otimes T$ if and only if $i$ does not occur in the image
of $h$. For any $I \subseteq \{0,1,\dots, n\}$ the map  $S' \rightarrowtail [n]\otimes T$ factors through  $\bigcap_{i\in I}\partial_i[n]\otimes T$ if and only if the image of $h$ does not contain any element of $I$. 
\item[(2)] For any face $F\rightarrowtail T$, the map  $S' \rightarrowtail [n]\otimes T$ factors through $[n]\otimes F$ if and only if $u\colon S \rightarrowtail T$ factors through $F$. 
\end{enumerate}
\end{lemma}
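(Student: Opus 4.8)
The plan is to make precise the combinatorial encoding of monomorphisms into $[n]\otimes T$ suggested by the pictures above, and then to read the bijection and properties (1)--(2) off from it. Write $L_n$ for the linear tree with edges $0<1<\dots<n$, so that $[n]=\Delta[n]=\Omega[L_n]$, and recall that $[n]\otimes T$ is the dendroidal nerve of the Boardman--Vogt tensor product $L_n\otimes T$: its edges are the pairs $(i,t)$ with $0\le i\le n$ and $t$ an edge of $T$, while its operations are generated, modulo the interchange relations, by the \emph{level crossings} $v\otimes t\colon (k,t)\to(k-1,t)$ and by the \emph{copies of the vertices of $T$}, namely $i\otimes w\colon (i,t_1),\dots,(i,t_r)\to(i,t_0)$ for each vertex $w\colon t_1,\dots,t_r\to t_0$ of $T$. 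As recalled above, a monomorphism $S'\rightarrowtail[n]\otimes T$ is a face of a shuffle of $[n]$ and $T$, so it labels every edge of $S'$ by a colour $(i,t)$ --- injectively --- and every vertex of $S'$ by an operation of $L_n\otimes T$.

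From a monomorphism to a pair, I would take $\varphi\colon S'\rightarrowtail[n]\otimes T$, compose it with the projection $[n]\otimes T\to T$ induced by $[n]\to\Delta[0]$, and factor the resulting dendrex of $T$ as a degeneracy $S'\twoheadrightarrow S$ followed by a face $u\colon S\rightarrowtail T$. The degeneracy collapses exactly the (unary) vertices of $S'$ whose label is a level crossing --- equivalently, whose image in $T$ is an identity --- so the fibre over an edge $e$ of $S$ is a chain of edges of $S'$, all of $T$-colour $u(e)$, linked by such collapsed vertices; let $h(e)\subseteq\{0,\dots,n\}$ be the nonempty set of levels occurring along this chain. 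Since a level crossing strictly raises the level towards the leaves and $\varphi$ is order preserving, $h$ is a height function on $S$, and we put $\varphi\mapsto(u,h)$.

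From a pair to a monomorphism, conversely, given a face $u\colon S\rightarrowtail T$ and a height function $h$ on $S$, I would build a tree $S'$ by a \emph{staircase} recipe. Its edges are the pairs $(e,i)$ with $e$ an edge of $S$ and $i\in h(e)$; for each $e$ and each pair $i<i'$ of consecutive levels in $h(e)$ one inserts a unary vertex from $(e,i)$ to $(e,i')$ labelled by the composite level crossing $(i',u(e))\to(i,u(e))$; and for each vertex $w\colon e_1,\dots,e_r\to e_0$ of $S$ one inserts a vertex with output $(e_0,\max h(e_0))$ and inputs $(e_j,\min h(e_j))$, labelled by $\max h(e_0)\otimes u(w)$ precomposed on its $j$-th slot with the level crossing from $\min h(e_j)$ down to $\max h(e_0)$. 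Monotonicity of $h$ --- concretely the inequalities $\max h(e_0)\le\min h(e_j)$ for every vertex $w$ --- is exactly what makes all of these labels legitimate operations of $L_n\otimes T$. One then checks that $S'$ is a well-formed tree, that the labelling assembles into a map $S'\to[n]\otimes T$, and that $S'$ is a face of a suitable shuffle of $[n]$ and $T$, hence that this map is a monomorphism. That the two constructions are mutually inverse is then a matter of bookkeeping, resting on the fact that an operation of $L_n\otimes T$ is uniquely determined by its input and output colours together with its image in $T$ --- the interchange relations merely identifying its various presentations; this uniqueness forces the labels of the copies of the vertices of $S$, and hence pins down both $S'$ and $\varphi$ in terms of $(u,h)$.

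Properties (1) and (2) then follow by inspection of the construction. For (2): if $u$ factors through a face $F\rightarrowtail T$, every $T$-colour $u(e)$ and every vertex $u(w)$ occurring in $S'$ already lies in $F$, so $\varphi$ factors through $[n]\otimes F$; conversely, composing a factorization $S'\to[n]\otimes F$ with $[n]\otimes F\to F\rightarrowtail T$ and comparing with the degeneracy--face factorization of $S'\to T$ shows that $u$ factors through $F$. For (1): the subobject $\partial_i[n]\otimes T\hookrightarrow[n]\otimes T$ consists precisely of those dendrices no edge of which carries the level $i$ (on linear trees $\partial_i\Delta[n]\hookrightarrow\Delta[n]$ is the inclusion $L_{n-1}\hookrightarrow L_n$ which omits the colour $i$ but lets crossings jump over it), and since the edges of $S'$ carry exactly the levels in the image of $h$, the map $\varphi$ factors through $\partial_i[n]\otimes T$ if and only if $i$ does not occur in the image of $h$; intersecting the subobjects $\partial_i[n]\otimes T$ over $i\in I$ yields the general statement. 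The delicate point throughout is the "pair to monomorphism" construction: one must check carefully that the staircase recipe yields a well-formed tree carrying a legitimate, non-degenerate, edge- and vertex-injective labelling in $L_n\otimes T$, and that the level crossings absorbed into the copies of the vertices of $S$ exhaust all the available freedom, so that distinct pairs give distinct monomorphisms --- and this is exactly where an accurate grasp of the operations of the Boardman--Vogt tensor product $L_n\otimes T$ is indispensable.
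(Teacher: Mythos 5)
Your proposal is correct and follows essentially the same route as the paper: extract $(u,h)$ from a mono by projecting to $T$, factoring as degeneracy followed by face, and recording the levels over each collapsed chain; reconstruct $S'$ by subdividing each edge $e$ of $S$ into copies $(e,i)$, $i\in h(e)$, with monotonicity of $h$ guaranteeing the labels are legitimate. The one step you defer --- checking that the staircase tree is a face of an actual shuffle --- is exactly where the paper supplies an explicit further subdivision rule producing a complete shuffle $A$ of $[n]\otimes S$ containing $S'$ as a face, which is the cleanest way to discharge that check.
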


\emph{Example.} The tree $S$ with height function $h$ pictured in (1) and (2) above corresponds to the tree $S'$ : 
\begin{equation*}
\xymatrix@R=3pt@C=8pt{
&&& 			 && 			  &&&& \\
&&& *{ \bullet} 			 && 			  &&&& \\
&&& *{\circ} \ar@{-}[u]^{d_4}&& *{\circ} \ar@{-}[uu]_{e_5} &&&& \\
S'\colon &&&& *{ _\otimes} \ar@{-}[ul]^{d_3}  \ar@{-}[ur]_{e_4}		&&&&& \\
 &  & _{b_2} && *{\circ} \ar@{-}[u]_{c_3}  &&& \\
&&&*{_\otimes}  \ar@{-}[ul] \ar@{-}[ur]_{c_1}   &&&&\\
&&&*=0{}\ar@{-}[u]^{a_1} &&&&
}
\end{equation*}  
which is a face of the ``complete'' shuffle $A$ of $[n]\otimes S$:
\begin{equation*} 
\xymatrix@R=3pt@C=8pt{
&&&& 			 && 			  &&&& \\
&&&& 			 && 			  &&&& \\
&&&&& *{ \bullet} 			 && 	*{\circ} \ar@{-}[uu]_{e_5}		  &&& \\
&&*{\circ} \ar@{-}[uu]^{b_5} &&& *{\circ} \ar@{-}[u]^{d_4}&& *{\circ} \ar@{-}[u]_{e_4} &&& \\
 && *{\circ} \ar@{-}[u]^{b_4} &&&& *{ \bullet} \ar@{-}[ul]^{d_3}  \ar@{-}[ur]_{e_3}		&&&& \\
 A\colon &  & *{\circ} \ar@{-}[u]^{b_3} &&&& *{\circ} \ar@{-}[u]_{c_3}  && \\
 &  & *{\circ} \ar@{-}[u]^{b_2} &&&& *{\circ} \ar@{-}[u]_{c_2}  && \\
&&&&*{\bullet}  \ar@{-}[ull]^{b_1} \ar@{-}[urr]_{c_1}   &&&&\\
&&&& 			 && 			  &&&& \\
&&&&*{\circ} \ar@{-}[uu]_{a_1} &&&& \\
&&&& 			 && 			  &&&& \\
&&&&\ar@{-}[uu]_{a_0} &&&& 
}
\end{equation*}

\begin{proof}[Proof of lemma \ref{lemma:1}]
Given a complete shuffle $A\subseteq [n]\otimes T$ and a face $S' \rightarrowtail A$, consider the map 
	\begin{equation*}
\Edges(S') \rightarrow \Edges(A) \subseteq \Edges [n] \times \Edges (T) =\{0,\dots, n\}\times \Edges(T)
	\end{equation*}

Write $h': \Edges(S')\rightarrow \{0,\dots, n\}$ and $u' \colon \Edges(S') \rightarrow \Edges (T)$ for the two factors of this map. The function $u'$ in fact defines a map $S' \to T$ in $\Omega$, which we factor as 
	\begin{equation*}
	u \circ \sigma\colon S' \twoheadrightarrow S \rightarrow T.
	\end{equation*}
Let $h=h'\circ \sigma^{-1} :\Edges(S)\rightarrow \{0,1,\dots,n\}$. Then $h$ is a height function on $S$. 

In the above picture, $S$ is obtained from $S'$ by deleting the white vertices
as well as the numbers occuring as subscripts of the letters,
and $h$ is obtained by associating to an edge of $S$ the set of all numbers on corresponding edges of $S'$.

Conversely, suppose $u\colon S\rightarrowtail T$ and $h$ is a height function on $S$. We will construct a shuffle $A$, in fact of $[n]\otimes S \rightarrowtail [n]\otimes T$, and a map $\sigma \colon S' \rightarrow S$. We start by subdividing each edge $e$ in $S$ into a number of ``shorter'' edges $(e,i)$ for $i\in h(e)$. 
\begin{equation*}
\xymatrix@R=3pt@C=8pt{
&&& \\
&&& \\
&e && \\
&&\ar@{-}[uu] &
}
\xymatrix@R=3pt@C=8pt{ \\ \\ \ar@{~>}[rrr] & && } 
\xymatrix@R=3pt@C=8pt{
&&&& \\
&&&& \\
&*{\circ} \ar@{-}[uu]^{e_5}&&& \\
&*{\circ} \ar@{-}[u]^{e_3}&&&\\
&\ar@{-}[u]^{e_2} &&&
}
\xymatrix@R=3pt@C=8pt{ \\ \\ h(e)=\{2,3,5\} }
\end{equation*}

The resulting tree $S'$ has the same shape as $S$ (it permits a degeneracy $\sigma\colon S' \twoheadrightarrow S$), and has non-decreasing numbers along each path going up from the root. To define the map $S' \rightarrow [n]\otimes S\rightarrow [n]\otimes T$, we construct a complete shuffle of $[n]\otimes S$ and a face map $S' \rightarrowtail A$. (This shuffle will not be unique, but the composition $S'\rightarrow A \rightarrow [n]\otimes T$ is completely determined by $S\rightarrowtail T$ and $h$.) The complete shuffle will be a further subdivision of $S'$. Consider an edge $e$ of $S$ and its subdivision in $S'$ into edges $(e,i), i\in h(e)$; say $(e,i_1),\dots,(e, i_k)$ where $h(e)=\{i_1,\dots i_k\}$. 
\begin{itemize}
\item If $e$ is the root edge, we subdivide it further into $(e,i)$ for $0\leq i \leq i_k$.
\item Otherwise $e$ is the input edge of a vertex $v$ with output edge $d$, say. Let $j=\max h(d) \leq i_1$. If $e$ is an inner edge, we subdivide it further into $(e,i)$ where $j\leq i\leq i_k$. And if $e$ is a leaf, into $(e,i)$ with $j\leq i\leq n$. 
\end{itemize}
The resulting tree is a complete shuffle, with an obvious face map $S' \rightarrow A$ defined by deleting the newly added faces in $A$ which are not in $S'$. Properties $(1)$ and $(2)$ are clear from the construction. 
\end{proof}
%

\begin{remark}
The shuffle $A$ is uniquely determined by $S$ and $h$, but is not the only shuffle through which the corresponding map $S' \rightarrow [n]\otimes S$ factors. For example, 
\begin{equation*}
\xymatrix@R=3pt@C=8pt{
&&&&& &&&& \\
&&&&& &&&& \\
&& *{_\otimes} \ar@{-}[uu] && *{_\otimes} \ar@{-}[uu]&&&\\
&&&*{_\otimes}  \ar@{-}[ul]^4 \ar@{-}[ur]_{4}  &&&&\\
&&&&& &&&& \\
&&&*=0{}\ar@{-}[uu]^{3} &&&&
}
\end{equation*}  
\noindent factors through the left hand tree 
\begin{equation*}
\xymatrix@R=3pt@C=8pt{
&&&&& &&&& \\
&&&&& &&&& \\
&& *{_\otimes} \ar@{-}[uu] && *{_\otimes} \ar@{-}[uu]&&&\\
&& *{\circ} \ar@{-}[u]^4 && *{\circ} \ar@{-}[u]_4&&&\\
&&&*{\bullet}  \ar@{-}[ul]^3 \ar@{-}[ur]_{3}  &&&&\\
&&&&& &&&& \\
&&&*=0{}\ar@{-}[uu]^{3} &&&&
}
\xymatrix@R=3pt@C=8pt{
&&&&& &&&& \\
&&&&& &&&& \\
&& *{_\otimes} \ar@{-}[uu] && *{_\otimes} \ar@{-}[uu]&&&\\
&&&*{\circ}  \ar@{-}[ul]^4 \ar@{-}[ur]_{4}  &&&&\\
&&&&& &&&& \\
&&&*{\circ}  \ar@{-}[uu]^4  &&&&\\
&&&&& &&&& \\
&&&*=0{}\ar@{-}[uu]^{3} &&&&
}
\end{equation*}
\noindent as constructed, but also through the one on the right by the
Boardman-Vogt relation. 
\end{remark}

\begin{remark}
The argument breaks down if we replace $[n]$ by its closure
$\textrm{cl}[n]$ (obtained by adding a bald vertex on top of this linear tree),
because, for $S'\rightarrowtail \textrm{cl}[n]\otimes T$, the corresponding tree $S$ (with height function) need not map to $T$ because the valence may have gone down, as in 
\begin{equation*}
\xymatrix@R=3pt@C=8pt{
&&& *{\bullet} && \\
&&&&&  \\
&&&*{\, _\otimes}  \ar@{-}[uu]^{b_1}  \\
&&&&&  \\
&&&*=0{}\ar@{-}[uu]^{a_0} 
}
\xymatrix@R=3pt@C=8pt{ \\ \\ \ar@{>->}[rrr] & && }
\xymatrix@R=3pt@C=8pt{
&&& *{\bullet} && \\
&&&&&  \\
&&&*{\, _\otimes}  \ar@{-}[uu]^{1}  \\
&&&&&  \\
&&&*=0{}\ar@{-}[uu]^{0} 
}
\xymatrix@R=3pt@C=8pt{ \\ \\ \otimes }
\xymatrix@R=3pt@C=8pt{
&&&&& &&&& \\
&&&&& &&&& \\
&&&*{_\otimes}  \ar@{-}[ul]^b \ar@{-}[ur]_{c}  &&&&\\
&&&&& &&&& \\
&&&*=0{}\ar@{-}[uu]^{a} &&&&
}
\end{equation*}

\end{remark}

\begin{cor}\label{cor:1}
Let $A \rightarrowtail X$ be a normal monomorphism and $B \rightarrowtail Y$ a monomorphism of simplicial sets. Then 
	\begin{equation*}
A\otimes i_!Y \cup_{A \otimes i_!B} X\otimes  i_!B \rightarrow X\otimes i_!Y
	\end{equation*}
is again a normal monomorphism.
\end{cor}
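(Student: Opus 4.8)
The plan is to reduce Corollary~\ref{cor:1} to Proposition~\ref{prop:1.1} by a standard ``skeletal induction'' argument, exploiting the fact that normal monomorphisms in $\dSet$ and monomorphisms in $\sSet$ are each generated, as saturated classes, by boundary inclusions. Concretely, I would first recall that the class of maps having the left lifting property against a fixed class is always saturated (closed under pushouts, transfinite composition, and retracts), so it suffices to verify the pushout-product (or ``corner map'') property on generators. For the simplicial factor, every monomorphism $B\rightarrowtail Y$ is a transfinite composition of pushouts of boundary inclusions $\partial\Delta[n]\rightarrow\Delta[n]$; for the dendroidal factor, every normal monomorphism $A\rightarrowtail X$ is a transfinite composition of pushouts of the boundary inclusions $\partial\Omega[T]\rightarrow\Omega[T]$ together with, possibly, generators that adjoin nondegenerate dendrices freely (accounting for the ``normal'' hypothesis, which controls the free $\mathrm{Aut}(T)$-action). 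A formal argument via the pushout-product lemma then shows that the corner map for $(A\rightarrowtail X)$ against $(B\rightarrowtail Y)$ lies in the saturation of the set of corner maps for generators against generators, which by Proposition~\ref{prop:1.1} (applied with $n$ the simplicial dimension and $T$ the tree) consists of normal monomorphisms.

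The key steps, in order, would be: (1) Observe $i_!$ is a left adjoint, hence preserves colimits, so it carries the cellular filtration of $B\rightarrowtail Y$ into a cellular filtration of $i_!B\rightarrowtail i_!Y$ built from pushouts of $i_!\partial\Delta[n]\rightarrow i_!\Delta[n]$. (2) Recall that $\otimes$ on $\dSet$ is a left adjoint in each variable (it is defined via left Kan extension from $\Omega\times\Omega$), so $X\otimes(-)$ and $(-)\otimes i_!Y$ preserve colimits; this lets the corner-map construction commute with the two cellular filtrations. (3) Invoke the general ``corner map of saturated classes'' lemma: if $\mathcal{L}$ is saturated and $i$, $j$ are in the saturations of generating sets $\mathcal{I}$, $\mathcal{J}$, then the corner map $i\,\hat\otimes\, j$ lies in the saturation of $\{i'\,\hat\otimes\,j' : i'\in\mathcal{I}, j'\in\mathcal{J}\}$ provided $\otimes$ preserves colimits in each variable. (4) Identify the generating corner maps: for $i' = (\partial\Omega[T]\rightarrow\Omega[T])$ and $j' = i_!(\partial\Delta[n]\rightarrow\Delta[n])$, the corner map is exactly the map of Proposition~\ref{prop:1.1}, so it is a normal monomorphism; one must also check that the extra dendroidal generators (adjoining free cells) give corner maps that are still normal monos, which is routine since tensoring a free cell with anything stays normal. (5) Conclude that the saturation of normal monomorphisms contains the corner map for $(A,B)$.

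The main obstacle I anticipate is bookkeeping the ``normality'' hypothesis correctly: normal monomorphisms of dendroidal sets are \emph{not} simply all monomorphisms, and the generating set is not literally just the $\partial\Omega[T]\rightarrow\Omega[T]$ but must be taken $\mathrm{Aut}(T)$-equivariantly (one adjoins nondegenerate cells together with their full automorphism orbit, freely). So the careful point is to use the known characterization (from Cisinski--Moerdijk's earlier work) that the normal monomorphisms form the saturation of $\{\Omega[T]\times E\cup\partial\Omega[T]\times E' \to \cdots\}$-type cells, or equivalently that a monomorphism with normal target is automatically normal — which is precisely the fact quoted in the remark after Proposition~\ref{prop:1.1}. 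Using that remark, the cleanest route is: show the corner map in question is a monomorphism with normal target (the target $X\otimes i_!Y$ is normal because $X$ is normal and $i_!Y$ is normal, tensor products of normal dendroidal sets being normal), and that it is a monomorphism follows from Proposition~\ref{prop:1.1} by the same skeletal induction but now only needing injectivity, which is formal. That observation should let me sidestep most of the equivariant subtlety and makes step (4) above only require the monomorphism assertion of Proposition~\ref{prop:1.1}, not its full strength.
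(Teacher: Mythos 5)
Your proposal is correct and is essentially the argument the paper intends (Corollary \ref{cor:1} is deduced from Proposition \ref{prop:1.1} by exactly this saturation/skeletal induction, using that $i_!$ and $\otimes$ preserve colimits and that $i_!(\partial\Delta[n]\to\Delta[n])$ is $\partial\Omega[i(n)]\to\Omega[i(n)]$ for the linear tree). One small correction: no equivariant modification of the generators is needed, since the normal monomorphisms of dendroidal sets are precisely the saturation of the plain boundary inclusions $\partial\Omega[T]\to\Omega[T]$ --- but your fallback via ``monomorphism with normal target is normal'' handles this in any case.
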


\section{Tensoring open trees}
\subsection{Faces of a tree}

Let $T$ be a tree. Recall that a \emph{face} of $T$ is a subtree of the form 
\begin{itemize}
\item $\partial_x(T)$, where $x$ is an internal edge (``internal face'')
\item $\partial_v(T)$, where $v$ is a top vertex (``external/top face'')
\item $\partial_r(T)$, where $x$ is an internal edge (``external/root face'')
\end{itemize}
The root face is defined only if there is just one internal edge attached to the root. In addition, by definition, for a corolla $C$ any edge of $C$ is a face of $C$. 
(For a composition $S \rightarrowtail T$ of face inclusions,
we sometimes refer to $S$ as a face of higher codimension; or --- for emphasis --- to the faces of codimension 1 listed above as ``elementary'' faces.)

For a leaf $y$ of a tree $T$, attached to a vertex $v$, we define
	\begin{equation*}
	D_y(T)
	\end{equation*}
to be the sum of the connected components of the graph (a forest) obtained from $T$ by deleting the vertex $v$ and the edge $y$. To be more explicit, for a given edge $z$ in $T$ we will write $T/z$ for a subtree with root edge $z$. 
Similarly, for a vertex $v$, we will write $v/T$ for the connected component of $T-v$ containing the root edge. 
Then 
	\begin{equation*}
	D_y(T)= T/y_1 + \dots + T/y_n +v/T
	\end{equation*}
where $y=y_0$, $y_1$, ..., $y_n$ are the input edges of $v$ (one also says $y_1,\dots, y_n$ are ``siblings'' of y). 


Similarly, if $x_1, \dots, x_n$ are the inputs of the root vertex $r$, and $y$ is the root edge, we define 
	\begin{equation*}
	D_r(T)= T/x_1 + \dots + T/x_n=D_y(T).
	\end{equation*}
Finally, if $y$ is an internal edge, we will also write 
	\begin{equation*}
	D_y(T)=\partial_y(T).
	\end{equation*}

\subsection{Intersection of faces}

Let us list the intersection of various types of pairs of faces: 
\begin{enumerate}
\item[(a)] For two external top faces $\partial_v T$ and $\partial_w T$ we have 
	\begin{equation*}
	\partial_v\partial_w T = \partial_v T \cap \partial_w T = \partial_w \partial_v T.
	\end{equation*}
\item[(b)] For two internal faces $\partial_x T$ and $\partial_y T$ we similarly have 
	\begin{equation*}
	\partial_x\partial_y T = \partial_x T \cap \partial_y T = \partial_y \partial_x T.
	\end{equation*}
\item[(c)] For an internal edge $x$ and a top vertex $v$ (or the root vertex $r$ if $\partial_rT$ is defined), if $x$ is not attached to $v$ then again  
	\begin{equation*}
	\partial_x\partial_v T = \partial_x T \cap \partial_v T = \partial_v \partial_x T.
	\end{equation*}
\item[(d)] For the internal edge $x$ attached to a top vertex $v$ we have 
	\begin{equation*}
	\partial_x T \cap \partial_v T = D_x (\partial_v T).
	\end{equation*}
\item[(e)] Similarly, for the root face $\partial_r T$ and the internal edge $y$ attached to $r$, we have 
	\begin{equation*}
	\partial_y T \cap \partial_r T = D_y (\partial_r T).
	\end{equation*}
\end{enumerate}

\begin{definition}
A tree $T$ is called \emph{open} if it has no bald vertices (no stumps, no nullary vertices). Notice that the full subcategory of open trees forms a sieve in $\Omega$. In other words, if $S \rightarrow T$ is a morphism in $\Omega$ and $T$ is open then so is $S$. 
\end{definition}

\subsection{The shuffle lemma for open trees}

Recall that for two trees $S$ and $T$, their tensor product is a union of representable presheaves $A\subseteq S\otimes T$ where $A$ is a tree obtained as a \emph{shuffle} of $S$ and $T$ --- we also say ``a shuffle of $S\otimes T$''. 

\begin{lemma}
Let $S$ and $T$ be trees, and assume $S$ is \emph{open}. Let $y$ be an edge of $T$, and $A\subseteq S\otimes T$ a shuffle of $S\otimes T$. If $F$ is a face of $A$ which does not contain $y$, then $F$ is contained in $S\otimes D_yT$ (i.e. in $S\otimes R$ for a connected component $R$ of $D_yT$). 
\end{lemma}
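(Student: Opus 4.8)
The plan is to argue directly with the combinatorics of shuffles. Recall that a shuffle $A$ of $S\otimes T$ is a tree whose edge set is contained in $\Edges(S)\times\Edges(T)$, with root $(r_S,r_T)$, and that every vertex $w$ of $A$ arises either from a vertex of $S$ --- in which case, for some fixed $t\in\Edges(T)$, the output and inputs of $w$ are $(s_0,t),(s_1,t),\dots,(s_p,t)$ where $s_0,s_1,\dots,s_p$ are the output and inputs of a vertex of $S$ --- or from a vertex of $T$ --- in which case, for some fixed $s\in\Edges(S)$, they are $(s,t_0),(s,t_1),\dots,(s,t_q)$ where $t_0,t_1,\dots,t_q$ are the output and inputs of that vertex of $T$. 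In particular the $T$-coordinate $\pi_T$ of an edge is non-decreasing as one moves up in $A$, and it changes only across a vertex of $A$ that arises from a vertex of $T$. There are two preliminary observations. First, in each of the three cases for $y$ (internal edge, leaf, root edge) one reads off from the definitions that $\Edges(D_yT)=\Edges(T)\setminus\{y\}$; since $S\otimes(-)$ takes a disjoint union of trees to the corresponding disjoint union of tensor products, it suffices to prove (i) that $\pi_T(\Edges F)$ is contained in $\Edges(R)$ for a single connected component $R$ of $D_yT$, and then (ii) that $F\to S\otimes T$ factors through $S\otimes R\rightarrowtail S\otimes T$. Second, that $F$ does not contain $y$ says exactly that $y\notin\pi_T(\Edges F)$.

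Claim (i) is the heart of the matter, and the only place where openness of $S$ enters. Suppose $\pi_T(\Edges F)$ met two distinct components of $D_yT$. Then $D_yT$ is disconnected, so we are not in the internal-edge case (there $D_yT=\partial_yT$ is connected); thus $y$ is a leaf of $T$ attached to a vertex $v$, or $y$ is the root edge and the root vertex $r$ has more than one input. Write $c$ for that vertex; the components of $D_yT$ are then precisely the components of the forest $T\setminus\{c\}$ other than the one-edge component $\{y\}$. Since $F$ is connected and its edges are partitioned according to which component of $D_yT$ contains their $T$-coordinate, some vertex $w_F$ of $F$ is incident to two edges whose $T$-coordinates lie in distinct components of $T\setminus\{c\}$. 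Under the face inclusion $F\rightarrowtail A$ the vertex $w_F$ corresponds to a connected subtree $B\subseteq A$ whose outer (root and leaf) edges are the edges of $F$ incident to $w_F$ and whose inner edges are among those contracted to form $F$; thus $B$ contains all edges incident to each of its vertices, and has edges with $T$-coordinates in two distinct components of $T\setminus\{c\}$. Walking along a path in $B$ between two such edges, the $T$-coordinate must pass from one of these components to the other, which by the structure above can only happen across a vertex of $B$ arising from a vertex of $T$; and $c$ is the only vertex of $T$ incident to edges of two different components of $T\setminus\{c\}$. Hence $B$ contains a vertex $w$ arising from $c$, and one of the edges of $A$ incident to $w$ has the form $(s,y)$ --- its output if $c=r$, one of its inputs if $c=v$ --- so $(s,y)\in\Edges(B)$.

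It remains to reach a contradiction from $(s,y)\in\Edges(B)$. Were $(s,y)$ an outer edge of $B$ it would be an edge of $F$, against $y\notin\pi_T(\Edges F)$; so $(s,y)$ is inner in $B$. Among the non-empty set of edges of $B$ with $T$-coordinate $y$, pick $\beta$ that is maximal for the tree order of $A$ if $c=v$, and minimal if $c=r$. If $\beta$ were inner in $B$ there would be a vertex of $B$ immediately above $\beta$ (case $c=v$), respectively immediately below $\beta$ (case $c=r$); this vertex cannot arise from a vertex of $T$, because $y$ is a leaf (resp. the root edge) of $T$ and hence not the output (resp. an input) of any vertex of $T$, so it arises from a vertex $u$ of $S$. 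When $c=v$, openness of $S$ forces $u$ to have an input, giving an edge of $B$ with $T$-coordinate $y$ strictly above $\beta$ --- contradicting maximality; when $c=r$, the output of this vertex is an edge of $B$ with $T$-coordinate $y$ strictly below $\beta$ --- contradicting minimality. So $\beta$ is an outer edge of $B$, hence an edge of $F$ with $T$-coordinate $y$, the desired contradiction. This is the step I expect to be the main obstacle: a face of $A$ need not be obtained from $A$ by pruning alone but may contract inner edges, so an occurrence $(s,y)$ of $y$ can be hidden inside a contracted subtree $B$; the hypothesis that $S$ is open is precisely what prevents a branch of constant $T$-coordinate $y$ inside $B$ from ending at a stump --- and without it the lemma fails already for $S=\textrm{cl}[1]$.

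Finally (ii). Fix the component $R$ produced by (i). As $R$ is a subtree of $T$, the set $\Edges(R)$ is convex for the tree order of $\Edges(T)$; together with the monotonicity of $\pi_T$ this forces the $T$-coordinate of every edge of $A$ lying on a path between two edges of $F$ to belong to $\Edges(R)$, and consequently every vertex of $A$ met by $F$ that arises from a vertex of $T$ to arise from a vertex of $R$. Hence the convex hull $A_F\subseteq A$ of $\Edges(F)$ is a connected subtree involving only vertices of $S$ and of $R$, so that $A_F$ --- and therefore $F$, which is a face of $A_F$ --- is a face of a shuffle of $S\otimes R$, i.e. $F$ is contained in $S\otimes R$. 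This completes the argument.
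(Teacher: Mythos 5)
Your claim (i) --- the heart of your argument, where openness enters --- is essentially correct, and it is a genuinely more ``coordinate-wise'' substitute for cases (ii) and (iii) of the paper's proof: where the paper identifies the connected components of $A$ minus the $y$-coloured edges directly as shuffles of $S'\otimes T/y_i$ and $S'\otimes v/T$, you locate a hidden occurrence $(s,y)$ inside a contracted subtree $B$ and use openness of $S$ to push an extremal such occurrence out to an outer edge of $B$, hence into $F$. That part I believe. The problem is your claim (ii) in the case where $y$ is an \emph{internal} edge of $T$ --- precisely the case in which claim (i) is vacuous ($D_yT=\partial_yT$ is connected), so that all of the content of the lemma sits in (ii). There $\Edges(R)=\Edges(T)\setminus\{y\}$ is \emph{not} convex for the tree order of $\Edges(T)$: the root edge and any leaf above $y$ both survive in $\partial_yT$, while $y$, which lies between them, does not. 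So the first sentence of your step (ii) fails, and with it the conclusion that the convex hull $A_F$ avoids the colour $y$.

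The failure is not cosmetic. Take $S$ linear with edges $a<b$ and one vertex, $T$ linear with edges $t_0<y<t_1$ and two vertices, and the shuffle $A\colon (a,t_0)\to(a,y)\to(b,y)\to(b,t_1)$ whose three vertices are alternately of type $T$, $S$, $T$. The face $F$ contracting both inner edges avoids $y$, but its convex hull $A_F$ is all of $A$; it contains $y$-coloured edges and vertices arising from both vertices of $T$ adjacent to $y$, so it is not a subtree of any shuffle of $S\otimes\partial_yT$, contrary to what you assert. The face $F$ \emph{does} lie in $S\otimes\partial_yT$, but only because its composite vertex, viewed as an operation of the tensor product operad, can be rewritten by the Boardman--Vogt interchange relation as a composite in which the $S$-vertex has been pushed above the $y$-coloured region --- after which the $y$-edges become isolated and can be contracted within a shuffle of $S\otimes\partial_yT$. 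This interchange step is exactly case (i) of the paper's proof and is the missing ingredient in yours: a purely ``subtree of $A$'' argument cannot produce the factorization through $S\otimes\partial_yT$, because the required shuffle of $S\otimes\partial_yT$ is not a subobject of $A$ but only agrees with a face of $A$ as a subobject of $S\otimes T$.
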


\begin{proof}
Let $F\subseteq A\subseteq S\otimes T$ be a face of shuffle $A$ not containing the colour $y$. We distinguish three cases. 
\begin{enumerate}
\item[(i)] The edge $y$ is internal in $T$. 
Consider the subtrees 
	\begin{equation*}
	R_1, \dots, R_k 
	\end{equation*}
of $A$ given by the edge with colour $y$. These are all internal edges of $A$, and $F$ is contained in the face $G$ (of higher codimension) of $A$ obtained by contracting all these edges. By the BV--relation, $G$ is also a face of the shuffle $B$ obtained from $A$ by pushing up the $S$-vertices in these subtrees $R_1,\dots, R_k$, so that no two occurrences of $y$ in $B$ are connected. Here is a picture illustrating this for small trees $S$ and $T$:
\begin{equation*}
\xymatrix@R=3pt@C=8pt{
&&&&& &&&& \\
&&&&& &&&& \\
&& *{\circ} \ar@{-}[uu]^c && *{\circ} \ar@{-}[uu]_e&&&\\
S\colon &&&*{\circ}  \ar@{-}[ul]^b \ar@{-}[ur]_{d}  &&&&\\
&&&&& &&&& \\
&&&\ar@{-}[uu]^{a} &&&&
}
\xymatrix@R=3pt@C=8pt{
&&&&& &&&& \\
&& &&&&&\\
&& *{\bullet} \ar@{-}[uul]^z \ar@{-}[uur]_v &&&&&\\
T\colon&&&*{\bullet}  \ar@{-}[ul]^y \ar@{-}[ur]_{w}  &&&&\\
&&&&& &&&& \\
&&&\ar@{-}[uu]^{x} &&&&
}
\end{equation*}
\begin{equation*}
\xymatrix@R=3pt@C=6pt{
&&&&&&&&& \\
&&&&&&&&& \\
&&*{\bullet} \ar@{-}[uul]^{cz} \ar@{-}[uur]_{cv} &&&&*{\bullet} \ar@{-}[uul]^{ez} \ar@{-}[uur]_{ev)} \\
 \\
&&*{\circ} \ar@{-}[uu]^{cy}&&&& *{\circ} \ar@{-}[uu]_{ey}\\
 \\
A\colon &&&& *{\circ} \ar@{-}[uull]^{by} \ar@{-}[uurr]_{dy} &&&&\\
\\
&&&&&*{\bullet}  \ar@{-}[uul]^{ay} \ar@{-}[uurrr]_{aw}  \\
\\
&&&&&\ar@{-}[uu]^{ax} 
}
\xymatrix@R=3pt@C=6pt{
&&&&&&&&& \\
&&&&&&&&& \\
&*{\circ} \ar@{-}[uu]^{cz} &&*{\circ} \ar@{-}[uu]^{cv}&&*{\circ} \ar@{-}[uu]^{ez}&&*{\circ} \ar@{-}[uu]^{ev} && \\
 \\
&&*{\bullet} \ar@{-}[uul]^{bz} \ar@{-}[uur]_{bv} &&&& *{\bullet} \ar@{-}[uul]^{dz} \ar@{-}[uur]_{dv} \\
 \\
\Rightarrow &&&& *{\circ} \ar@{-}[uull]^{by} \ar@{-}[uurr]_{dy} &&&&\\
\\
&&&&&*{\bullet}  \ar@{-}[uul]^{ay} \ar@{-}[uurrr]_{aw}  \\
\\
&&&&&\ar@{-}[uu]^{ax} 
}
\xymatrix@R=3pt@C=6pt{
&&&&&&&&& \\
&&&&&&&&& \\
&*{\circ} \ar@{-}[uu]^{cz} &&*{\circ} \ar@{-}[uu]^{ez}&&*{\circ} \ar@{-}[uu]^{cv}&&*{\circ} \ar@{-}[uu]^{ev} && \\
 \\
&&*{\circ} \ar@{-}[uul]^{bz} \ar@{-}[uur]_{dz} &&&& *{\circ} \ar@{-}[uul]^{bv} \ar@{-}[uur]_{dv} \\
 \\
\Rightarrow B\colon&&&& *{\bullet} \ar@{-}[uull]^{az} \ar@{-}[uurr]_{av} &&&&\\
\\
&&&&&*{\bullet}  \ar@{-}[uul]^{ay} \ar@{-}[uurrr]_{aw}  \\
\\
&&&&&\ar@{-}[uu]^{ax} 
}
\end{equation*}
Contracting these isolated internal edges $y$ in $B$ defines a shuffle of $S\otimes\partial_yT$, isomorphic to $G$ (as subobject of $S\otimes T$) and hence containing $F$. 
\item[(ii)] The edge $y$ is a leaf of $T$. 
Write, as before, 
	\begin{equation*}
	D_y(T)= T/y_1 + \dots + T/y_n +v/T
	\end{equation*}
for the connected components of $T-\{y, v\}$. Since $S$ is open the colour $y$ also occurs on the leaves of $A$ (in fact, as many times as the number of leaves of $S$). Since $y$ does not occur in the face $F$, this face must be a face (of higher codimension) contained in one of the connected components of $A$ obtained by deleting all these edges with colour $y$ from $A$ (and the vertices attached to these edges, of course). But these connected components are shuffles of $S' \otimes T/y_i$ of of $S' \otimes v/T$ for subtrees $S'$ of $S$. 
(In fact, one knows exactly which subtrees: if 
	\begin{equation*}
	(b_i, y), \quad i=1,\dots ,k
	\end{equation*}
is a list of all lowest occurrences of the colour $y$, then the shuffles are of $S/b_i \otimes T/y_j$, and of $R\otimes v/T$ where $R\subseteq S$ is the subtree of $S$ with the same root and leaves $b_1,\dots, b_k$. )
\item[(iii)] The edge $y$ is the root edge of $T$. 
In this case the colour $y$ also occurs on the root of the shuffle $A$. In fact, the edges coloured $y$ form a subtree of $A$ isomorphic to a subtree $S'$ of $S$ containing the root of $S$ (an ``initial segment''), with leaves $b_1,\dots, b_k$ say. 
Then $F$ must be a face of $S/b_i \otimes T/y_j \subseteq S/b_i \otimes D_rT$ for some input edge $y_j$ of the root $r$ of $T$. Hence $F\subseteq S\otimes D_y(T)=S\otimes D_r(T)$. 
\end{enumerate}
\end{proof}

\begin{remark}
The lemma fails if $S$ is not assumed to be open. A minimal example is 
\begin{equation*}
\xymatrix@R=3pt@C=8pt{
&&&  \\
& *{\circ} && \\
S\colon&&&  \\
& \ar@{-}[uu]^{a}  
}
\xymatrix@R=3pt@C=8pt{
&&& \\
&&&  \\
T\colon&&*{\bullet}  \ar@{-}[ul]^y \ar@{-}[ur]_{z}  \\
&&&&  \\
&&*=0{}\ar@{-}[uu]^{x} 
}
\xymatrix@R=3pt@C=8pt{
&&&&&  \\
&*{\circ}&& *{\circ}&  \\
A\colon&&*{\bullet}  \ar@{-}[ul]_{(a,y)} \ar@{-}[ur]_{(a,z)}  \\
&&&&&  \\
&&*=0{}\ar@{-}[uu]^{(a,x)} 
}
\xymatrix@R=3pt@C=8pt{
&& *{\circ} && \\
&&&&  \\
F\colon&&*{\bullet}  \ar@{-}[uu]^{(a,z)}  \\
&&&&  \\
&&\ar@{-}[uu]^{(a,x)} 
}
\xymatrix@R=3pt@C=8pt{
&&&&&  \\ &&&&&  \\ D_yT\colon &&+&&  \\
& \ar@{-}[uu]^{x}  &  & \ar@{-}[uu]^{z} 
}
\end{equation*}
\end{remark}

\begin{prop} \label{prop}
Let $S$ and $T$ be trees, and let $x,y,z$ be inner edges or top or root vertices
(when the root face is defined), with $x$ in $S$ and $y,z$ in $T$. 
\begin{itemize}
\item[(i)] If $S$ is open then 
	\begin{equation*}
	S\otimes \partial_y T \cap S\otimes \partial_z T= S\otimes (\partial_y T \cap \partial_z T).
	\end{equation*}
\item[(ii)] If $S$ and $T$ are both open then 
	\begin{equation*}
	\partial_x S\otimes  T \cap S\otimes \partial_y T= \partial_xS\otimes \partial_y T.
	\end{equation*}
\end{itemize}
\end{prop}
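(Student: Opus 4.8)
The plan is to prove both identifications by reducing them to statements about faces of shuffles, and then apply the shuffle lemma for open trees (the unnamed lemma immediately preceding Proposition \ref{prop}) together with the explicit list of intersections of faces in subsection 2.2. As in Proposition \ref{prop:1.1}, every inclusion from right to left is trivial, and since all the objects in sight are subobjects of a normal dendroidal set, only the set-theoretic inclusions from left to right need to be checked; and since a dendrex of $S\otimes T$ factors as a degeneracy followed by a face of a shuffle, it suffices to treat faces $F\rightarrowtail S\otimes T$ that factor through a shuffle $A$.

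For part (i), I would take a face $F$ of a shuffle $A\subseteq S\otimes T$ that lies in both $S\otimes\partial_yT$ and $S\otimes\partial_zT$. By part (2) of the analogous reasoning, $F\subseteq S\otimes\partial_yT$ means exactly that the colour(s) removed in passing from $T$ to $\partial_yT$ do not occur in $F$ — more precisely, when $y$ is an inner edge this means $F$ does not contain the colour $y$, when $y$ is a top vertex it means $F$ does not contain that vertex (equivalently omits its leaf-occurrences appropriately), and similarly for the root. So I would split into the cases according to the types of $y$ and $z$ and, in each case, use the shuffle lemma to conclude that $F$ is contained in $S\otimes D_yT$ and in $S\otimes D_zT$, hence in $S\otimes(D_yT\cap D_zT)$; then I would match $D_yT\cap D_zT$ against the appropriate item (a)--(e) of the intersection list to identify it with $\partial_yT\cap\partial_zT$ (using $D_yT=\partial_yT$ for inner edges, and the component/forest descriptions for top and root faces). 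The subtlety is that $D_yT$ is in general a forest, so ``$F\subseteq S\otimes D_yT$'' means $F\subseteq S\otimes R$ for one component $R$; I would need to check that the two chosen components (one for $y$, one for $z$) are compatible, i.e. that $F$ lands in a common piece of $S\otimes(\partial_yT\cap\partial_zT)$, which is where cases (d) and (e) — $x$ attached to $v$, $y$ attached to $r$ — require the most care.

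For part (ii), with both $S$ and $T$ open, I would take a face $F$ of a shuffle $A\subseteq S\otimes T$ lying in $\partial_xS\otimes T$ and in $S\otimes\partial_yT$. Applying the shuffle lemma once in the $S$-variable (legitimate since $T$ is open, by the symmetric form of the lemma) gives $F\subseteq\partial_xS\otimes T$ forcing $F\subseteq D_xS\otimes T$, and applying it in the $T$-variable (since $S$ is open) gives $F\subseteq S\otimes D_yT$; combining, $F\subseteq D_xS\otimes D_yT$, and again reading off $D_xS=\partial_xS$, $D_yT=\partial_yT$ in the inner-edge case and using the forest descriptions otherwise, one gets $F\subseteq\partial_xS\otimes\partial_yT$ after checking the components match up. Here openness of both trees is essential precisely so that the shuffle lemma is available in each variable separately.

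The main obstacle I anticipate is the bookkeeping in the ``mixed'' cases — when $y$ (or $z$, or $x$) is a top vertex or the root vertex rather than an inner edge — because then $D_yT$ is a genuine forest and ``$F$ is contained in $S\otimes D_yT$'' only pins down one connected component; one must argue that the components selected by the two hypotheses are the same component of the intersection, i.e. that the face data are globally consistent. This amounts to a careful but essentially combinatorial case analysis driven by the list (a)--(e), and in particular by cases (d) and (e) where the removed edge is attached to the removed vertex, so that $\partial_xT\cap\partial_vT=D_x(\partial_vT)$ is not simply $\partial_x\partial_vT$. I expect no conceptual difficulty beyond this, since the shuffle lemma already does the real work of controlling which colours survive in a face.
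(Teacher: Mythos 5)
Your overall toolkit is the right one (reduce to faces of shuffles, apply the shuffle lemma, read off the answer from the intersection list (a)--(e)), but the step where you combine the two applications of the lemma is circular. From $F\subseteq S\otimes D_yT$ and $F\subseteq S\otimes D_zT$ you conclude ``hence $F\subseteq S\otimes(D_yT\cap D_zT)$'': that inference is exactly an instance of the distributivity of $\otimes$ over intersections that Proposition \ref{prop}(i) is asserting, so it cannot be assumed. You flag the resulting ``component compatibility'' problem as a bookkeeping issue, but you give no mechanism for resolving it. The paper's proof avoids the problem entirely by applying the shuffle lemma \emph{asymmetrically, inside the smaller tensor product}: since $F$ is a face of a shuffle $B$ of $S\otimes\partial_zT$ and the colour $y$ (or the external edges attached to the vertex $y$) does not occur in $F$, the lemma applied to $B$ with second factor $\partial_zT$ puts $F$ directly into $S\otimes D_y(\partial_zT)$, which items (b), (d), (e) identify with $S\otimes(\partial_yT\cap\partial_zT)$; no intersection of two forests ever has to be taken. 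The same remark applies to your treatment of (ii): a \emph{single} application of the shuffle lemma in the $S$-variable to the shuffle $B$ of $S\otimes\partial_yT$ (legitimate because $\partial_yT$ is open, open trees forming a sieve) gives $F\subseteq\partial_xS\otimes\partial_yT$ at once, with no ``combining'' step.

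There is a second gap in your case analysis for (i). Only $S$ is assumed open there, so $T$ may have bald top vertices, and $y$, $z$ may be such vertices. Your plan handles a vertex $y$ by tracking the external edges attached to it, but a bald vertex has no input edges, so the edge-removal form of the shuffle lemma gives you nothing. The paper treats this as a separate case: a bald vertex $y$ of $T$ reappears as a bald vertex on top of the shuffle $B$ of $S\otimes\partial_zT$, and any face of $B$ omitting it (as a vertex or as a composite) already lies in a shuffle of $S\otimes\partial_y\partial_zT$. You would need to add this argument; as it stands your case split does not cover it. For the non-bald vertex case you should also make explicit the final step the paper uses: after applying the lemma to each external edge $y_i$ of the vertex, the components of $D_{y_i}(\partial_zT)$ are single edges $y_j$ together with $\partial_y\partial_zT$, and since no $y_j$ occurs in $F$ the only admissible component is $\partial_y\partial_zT$ --- this is what replaces your unproven compatibility claim.
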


\begin{cor}
If $S$ and $T$ are open trees then 
	\begin{equation*}
S \otimes \partial T \cup_{\partial S \otimes \partial T} \partial S\otimes  T \rightarrow S\otimes T
	\end{equation*}
is a monomorphism (hence, $S\otimes T$ being normal, a normal monomorphism).  
\end{cor}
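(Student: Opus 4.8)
The plan is to reduce the corollary to a combinatorial statement about dendrices exactly as was done for Proposition~\ref{prop:1.1}. Since $S\otimes T$ is the dendroidal nerve of a $\Sigma$-cofibrant operad, it is normal, and a monomorphism with normal target is automatically a normal monomorphism; so it suffices to prove that the displayed map is injective on dendrices. Writing out what this means, we must show that the two maps $\partial S\otimes T\to S\otimes T$ and $S\otimes\partial T\to S\otimes T$ are monomorphisms and that their intersection inside $S\otimes T$ equals $\partial S\otimes\partial T$. As in Section~1, because the elementary face maps $\partial_x S\otimes T\to S\otimes T$ and $S\otimes\partial_y T\to S\otimes T$ are easily seen to be monomorphisms, the whole claim comes down to three identities of subobjects of $S\otimes T$:
\begin{enumerate}
\item[(B1)] $\partial_x S\otimes T\cap\partial_{x'}S\otimes T=(\partial_x S\cap\partial_{x'}S)\otimes T$;
\item[(B2)] $S\otimes\partial_y T\cap S\otimes\partial_z T=S\otimes(\partial_y T\cap\partial_z T)$;
\item[(B3)] $\partial_x S\otimes T\cap S\otimes\partial_y T=\partial_x S\otimes\partial_y T$.
\end{enumerate}

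First I would observe that (B2) is literally part~(i) of Proposition~\ref{prop}, proved under the hypothesis that $S$ is open, and (B3) is part~(ii), proved under the hypothesis that $S$ and $T$ are both open; both hypotheses are in force here. By the evident symmetry of $\otimes$ in $S$ and $T$, identity (B1) follows from (B2) applied with the roles of $S$ and $T$ interchanged, using that $T$ is open as well. So once the three identities are in hand, the argument is purely formal: the inclusions from right to left are obvious in each case, and the left-to-right inclusions give exactly that the pullback of $\partial S\otimes T\to S\otimes T$ along $S\otimes\partial T\to S\otimes T$ lands in $\partial S\otimes\partial T$, while (B1) and (B2) give that each of $\partial S\otimes T\to S\otimes T$ and $S\otimes\partial T\to S\otimes T$ is a monomorphism (one checks injectivity on a dendrex by locating which elementary faces it factors through and intersecting). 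Combining, the map out of the pushout $\partial S\otimes T\cup_{\partial S\otimes\partial T}S\otimes\partial T$ is a monomorphism.

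The one point that needs a little care is the reduction of the monomorphism property of $\partial S\otimes T\to S\otimes T$ (and of $S\otimes\partial T\to S\otimes T$) to the identities (B1), (B2): one must note that $\partial S=\bigcup_x\partial_x S$ as a union over elementary faces (internal edges, top vertices, and the root vertex when the root face is defined, plus the edges of a corolla in the degenerate case), so that $\partial S\otimes T=\bigcup_x(\partial_x S\otimes T)$, and then injectivity on dendrices follows because a dendrex of $\partial S\otimes T$ factoring through two different summands in fact factors through their intersection, which by (B1) is a tensor product $(\partial_x S\cap\partial_{x'}S)\otimes T$ already visible inside $\partial S\otimes T$; iterating over all pairs handles dendrices coming from arbitrarily many summands. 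This bookkeeping, together with the same argument on the $T$-side and the pullback identity (B3), is the bulk of the work, but it is entirely routine given Proposition~\ref{prop}. The main obstacle — namely establishing (B2) and (B3), which is where openness is genuinely used via the shuffle lemma for open trees — has already been dispatched in Proposition~\ref{prop}, so at this stage there is no real difficulty remaining; the corollary is essentially a repackaging.
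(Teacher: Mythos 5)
Your proposal is correct and follows exactly the route the paper intends: the corollary is stated there without proof precisely because it reduces, as in the proof of Proposition~\ref{prop:1.1}, to the three intersection identities, of which (B2) and (B3) are parts (i) and (ii) of Proposition~\ref{prop} and (B1) is (i) again via the symmetry of $\otimes$ (using that $T$ is also open). The bookkeeping you describe for assembling the elementary faces into $\partial S$ and $\partial T$ is the standard argument and is handled correctly.
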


Let us call a dendroidal set \emph{open} if it is a colimit of open trees (equivalently, a presheaf on the subcategory $\Omega^\circ \subseteq \Omega$ of open trees). Then by the usual induction, the previous corollary also gives 

\begin{cor}\label{cor:2}
Let $A \rightarrowtail X$ and $B \rightarrowtail Y$ be normal monomorphisms between open dendroidal sets. Then 
	\begin{equation*}
A\otimes Y \cup_{A \otimes B} X\otimes B \rightarrow X\otimes Y
	\end{equation*}
is again a normal monomorphism between open dendroidal sets. 
\end{cor}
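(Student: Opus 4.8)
The plan is to deduce Corollary \ref{cor:2} from the preceding corollary by the standard pushout-product (cell-attachment) induction, carried out inside the category of open dendroidal sets. Write $f \,\square\, g$ for the pushout-product map $A\otimes Y \cup_{A\otimes B} X\otimes B \to X\otimes Y$ associated to normal monomorphisms $f\colon A\rightarrowtail X$ and $g\colon B\rightarrowtail Y$. First note that everything stays open: since $\Omega^\circ\subseteq\Omega$ is a sieve, a shuffle of two open trees is again an open tree, so $\Omega[S]\otimes\Omega[T]$ is an open dendroidal set whenever $S,T$ are open; as $\otimes$ preserves colimits in each variable and open dendroidal sets are closed under colimits in $\dSet$, the objects $X\otimes Y$, $A\otimes Y$, $X\otimes B$ and their pushout are all open. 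Thus it only remains to see that $f\,\square\,g$ is a normal monomorphism.

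Recall that the normal monomorphisms between open dendroidal sets are closed under pushout, transfinite composition and retract (being normal monomorphisms in $\dSet$, a weakly saturated class, and open dendroidal sets being closed under colimits), and, via the skeletal filtration, this class is generated by the boundary inclusions $\partial\Omega[T]\rightarrowtail\Omega[T]$ with $T$ an \emph{open} tree --- each non-degenerate dendrex attached to build such a normal monomorphism is supported on a tree occurring in the (open) target, hence on an open tree. By the usual two-variable form of the pushout-product lemma --- using that $(-)\otimes Y$ and $X\otimes(-)$ preserve colimits, so that $f\mapsto f\,\square\,g$ and $g\mapsto f\,\square\,g$ carry pushouts, transfinite compositions and retracts of arrows to pushouts, transfinite compositions and retracts --- the class of maps $f\,\square\,g$ with $f,g$ arbitrary normal monomorphisms between open dendroidal sets is contained in the weak saturation of the set of maps $(\partial\Omega[S]\rightarrowtail\Omega[S])\,\square\,(\partial\Omega[T]\rightarrowtail\Omega[T])$ with $S,T$ open. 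But for open trees $S,T$ this map is exactly
\[
\Omega[S]\otimes\partial\Omega[T]\ \cup_{\partial\Omega[S]\otimes\partial\Omega[T]}\ \partial\Omega[S]\otimes\Omega[T]\ \longrightarrow\ \Omega[S]\otimes\Omega[T],
\]
which is a normal monomorphism by the previous corollary (its monomorphism part resting on Proposition \ref{prop}, and $\Omega[S]\otimes\Omega[T]$ being normal as the nerve of a $\Sigma$-cofibrant operad). Since normal monomorphisms between open dendroidal sets are closed under the saturation operations, every $f\,\square\,g$ is one, and tracking openness as above gives the statement.

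The only point requiring care is the bookkeeping of this double induction --- that the weak saturation of the generating pushout-products really captures all of them, and that ``normal monomorphism between open dendroidal sets'' is stable under the operations used --- but this is formal. The genuinely combinatorial input, namely the identification of intersections of faces inside $S\otimes T$ for open $S$ and $T$, has already been supplied by Proposition \ref{prop} together with the shuffle lemma for open trees, so no new geometric argument is needed here.
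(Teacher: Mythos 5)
Your proof is correct and is exactly the argument the paper intends: the paper disposes of this corollary with the single phrase ``by the usual induction, the previous corollary also gives,'' and your write-up simply makes that induction explicit (reduction to the generating boundary inclusions $\partial\Omega[S]\rightarrowtail\Omega[S]$ for open $S$, the pushout-product lemma, and the observation that openness is preserved throughout). No divergence from the paper's approach.
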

 
\begin{proof}[Proof of Proposition \ref{prop}]
(i) Let $A\subseteq S\otimes \partial_y T$ and $B\subseteq S\otimes \partial_z T$ be shuffles, and suppose $F$ is a face of $A$ as well as of $B$. 
\begin{itemize}
\item In case $y$ is an edge, $F$ is a face of $S\otimes \partial_z T$ in which the colour $y$ does not occur. So by the shuffle lemma, $F$ is a face of a shuffle of $S\otimes D_y\partial_zT= S\otimes (\partial_y T\cap \partial_z T)$.

\item In case $y$ and $z$ are both vertices, suppose that at least one of them is not bald, say $y$. Let $y_1,\dots, y_n$ be all the external edges attached to this vertex $y$. (These are leaves, plus possibly the root.) Then $F$ is a face of $B$ in which none of these occur. So by the shuffle lemma again, $F$ is a face of a shuffle of $S\otimes D_{y_i}\partial_z T $ for each $i$. 
But the connected components of $D_{y_i}(\partial_z T)$ are the single edges $y_j$ ($j\neq i$) together with the face $\partial_y(\partial_z T)$. Since the $y_i$ do not occur in $F$, we find that $F$ is a face $S\otimes \partial_y \partial_z(T)$. 

\item Finally, in case $y$ and $z$ are both bald vertices, they also occur as bald vertices on top of the shuffles $B$ and $A$, respectively. So $F$ is a face of $B$ say, in which this bald vertex $y$ cannot occur (not as a single vertex, nor as a composition). But such a face is already a face of  a shuffle of $S\otimes \partial_y\partial_y(T)=S\otimes (\partial_yT \cap \partial_z T)$. 
\end{itemize}

(ii) Let $A$ be a shuffle of $\partial_x S \otimes T$ and $B$ a shuffle of $S\otimes \partial_y T$, and suppose $F$ is a face of $A$ as well as of $B$. Hence $F$ is a face of $S\otimes \partial_y T$ in which $x$ does not occur. By the shuffle lemma, $F$ is a face
of $\partial_xS \otimes \partial_y T$. 
\end{proof}

\end{document}